\newcommand{\oset}[3][0ex]{%
	\mathrel{\mathop{#3}\limits^{
			\vbox to#1{\kern-2\ex@
				\hbox{$\scriptstyle#2$}\vss}}}}
\newtheorem{theorem}{Theorem}{\bf}{\it}
\newtheorem*{thm*}{Theorem}
{\bf}{\it}
\newtheorem{prop}[theorem]{Proposition}
\newtheorem{cor}[theorem]{Corollary}
\theoremstyle{definition}
\newtheorem{dfn}[theorem]{Definition}
\theoremstyle{remark}
\newtheorem{rmk}[theorem]{Remark}
\theoremstyle{remark}
\newcommand{\dRR}[2]{
	{\rm R\Gamma_{\dR}}
	(#1/#2)
}
\newcommand{\A}{\mathbb{A}}
\newcommand{\G}{\mathbb{G}}
\newcommand{\Q}{\mathbb{Q}}
\newcommand{\RR}{\mathbb{R}}
\newcommand{\mcF}{\mathcal{F}}
\newcommand{\mcD}{\mathcal{D}}
\newcommand{\mcO}{\mathcal{O}}
\newcommand{\mcU}{\mathcal{U}}
\newcommand{\mfm}{\mathfrak{m}}
\DeclareMathOperator{\Aff}{Aff}
\DeclareMathOperator{\colim}{colim}
\DeclareMathOperator{\dR}{dR}
\DeclareMathOperator{\eff}{eff}
\DeclareMathOperator{\et}{\acute{e}t}
\DeclareMathOperator{\gm}{gm}
\DeclareMathOperator{\op}{{op}}
\DeclareMathOperator{\Perf}{Perf}
\DeclareMathOperator{\Sm}{Sm}
\DeclareMathOperator{\Spec}{Spec}
\DeclareMathOperator{\uhom}{\underline{Hom}}
\DeclareMathOperator{\fd}{fd}
\DeclareMathOperator{\DA}{{DA}}
\DeclareMathOperator{\QCoh}{{QCoh}}
\begin{document}
	\title{A motivic proof of the finiteness of  relative de Rham cohomology}

\author[Vezzani]{Alberto Vezzani}
\address{Dipartimento di Matematica ``F. Enriques'' - Universit\`a degli Studi di Milano}
\email{alberto.vezzani@unimi.it}
\urladdr{users.mat.unimi.it/users/vezzani/}

\begin{abstract}
We give a  quick proof of the fact that the relative de Rham cohomogy groups $H^i_{\dR}(X/S)$ of a smooth  and proper map $X/S$ between schemes over $\Q$ are vector bundles on the base, replacing  Hodge-theoretic  and transcendental methods with $\A^1$-homotopy theory.
\end{abstract}

\maketitle

It is well known that the relative algebraic de Rham cohomology groups $H^i_{\dR}(X/S)$  define  vector bundles on $S$ whenever $X/S$ is a smooth and proper map of schemes over $\Q$. If $S$ is a smooth variety, this can be deduced from the fact that such groups have a natural structure of modules with connection (\cite{katz}, \cite[Section 2.2]{andre-ens}). In the general case, this is a result by Deligne \cite{deligne} which uses the degeneracy of the relative Hodge-de Rham spectral sequence, a reduction step to the case in which $S$ is the spectrum of a local artinian ring, and a further reduction to the case of a field using the invariance of de Rham under infinitesimal thickenings  (see also \cite[Lectures 9 and 10]{clausen}).  %

We now show that the result can be deduced formally from the $\A^1$-invariance of the de Rham cohomology. Indeed, all the  reduction steps in the original proof are ``motivic'' and can be condensed in a few lines (see the proof of \Cref{main}) at the expense of using some cornerstone results of motivic homotopy theory (the  localization triangle, the dualizability of smooth and proper maps). This proof  has the advantage of being independent on the specifics of the de Rham realization\footnote{For instance, we do not need to introduce the complex numbers.} (any other realization in quasi-coherent modules would do) and may be adapted to other contexts too.
\begin{dfn}
	\label{definition-of-omega-1}
	Let $f: X \to S$ be a smooth morphism of schemes. 
	We let $\dRR{X}{S}$ be the complex of $\mcO_S$-modules $\RR f_*\Omega^\bullet_{X/S}$.
\end{dfn}

The following proposition is just a collection of well-known facts \cite[Tag 0FK4]{stacks}.

\begin{prop}
	Let $f\colon X=\Spec  B\to S=\Spec A$ be a smooth map of affine schemes and $g\colon S'=\Spec A'\to \Spec A$ be another map.
	\begin{enumerate}
\item\label{1} Each $\Omega^d_{X/S}$ is a projective module on $X$ and $H^i_{\et}(X,\Omega^d_{X/S})=0$. 
\item\label{2} There is a canonical equivalence $g^*\dRR{X}{S}\simeq \dRR{X\times_SS'}{S'}$.
\item \label{3}If $g$ is also smooth, there is a canonical equivalence $\dRR{X}{S}\otimes_{\mcO_S}\dRR{S'}{S}\simeq \dRR{X\times_SS'}{S}$.
\item \label{4}Let $S$ be over $\Q$. If $X=\A^1_S$ then $\dRR{X}{S}\simeq\mcO_S[0]$ and if $X=\G_{m,S}$ then $\dRR{X}{S}\simeq\mcO_S[0]\oplus\mcO_S[-1]$. 
	\end{enumerate}
\end{prop}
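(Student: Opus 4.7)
The plan is to dispatch each of the four claims with a short standard argument; none of them should present a serious obstacle, and the author's own remark that these are ``well-known facts'' from the Stacks Project confirms this.

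For \eqref{1}, smoothness of $f$ gives that $\Omega^1_{X/S}$ is finite locally free on $X$, hence so is each exterior power $\Omega^d_{X/S}=\wedge^d\Omega^1_{X/S}$; this yields projectivity over $B$. The vanishing of the higher étale cohomology reduces to the Zariski statement via the comparison of étale and Zariski cohomology for quasi-coherent sheaves, and the Zariski vanishing on the affine $X=\Spec B$ is Serre's theorem.

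For \eqref{2}, I would observe that the formation of $\Omega^d_{X/S}$ commutes with arbitrary base change: there is a canonical isomorphism $g^{\prime\,*}\Omega^d_{X/S}\simeq \Omega^d_{X\times_S S'/S'}$ (where $g'\colon X\times_S S'\to X$), compatible with the de~Rham differentials. Because everything is affine and the components are projective, taking derived global sections commutes with the flat base change along $A\to A'$, producing the stated equivalence. Claim \eqref{3} is the Künneth formula: one has a natural isomorphism of complexes
\[
\Omega^\bullet_{X/S}\boxtimes_{\mcO_S}\Omega^\bullet_{S'/S}\xrightarrow{\ \sim\ }\Omega^\bullet_{X\times_S S'/S}
\]
on $X\times_S S'$, and applying $\RR(f\times g)_*$ together with the projectivity/affineness established in \eqref{1} turns the box product into the tensor product $\dRR{X}{S}\otimes_{\mcO_S}\dRR{S'}{S}$.

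Finally, \eqref{4} is a direct computation. For $X=\A^1_S=\Spec A[t]$ the de~Rham complex is $A[t]\xrightarrow{d}A[t]\,dt$; its kernel is $A$ (constants), and since $\Q\subseteq A$ every polynomial in $t$ admits an antiderivative, so the cokernel vanishes, giving $\mcO_S[0]$. For $X=\G_{m,S}=\Spec A[t,t^{-1}]$ the same map has kernel $A$ and cokernel $A\cdot dt/t$ (again using that $\Q\subseteq A$ to kill $t^n$ for $n\neq 0$), giving $\mcO_S[0]\oplus\mcO_S[-1]$. The only subtle point worth flagging is the étale-vs-Zariski comparison in \eqref{1}; the rest is formal from smoothness, projectivity, and the presence of $\Q$.
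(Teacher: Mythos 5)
Your arguments are correct and are precisely the standard ones that the paper itself delegates, without proof, to the cited Stacks Project tag. One phrase to tighten in (2): the base change $g$ is an arbitrary map, not a flat one, so the derived pullback $-\otimes^{\LL}_A A'$ is computed termwise not because the base change is flat but because each $\Omega^d_{B/A}$ is a flat $A$-module (it is projective over $B$, and $B$ is smooth, hence flat, over $A$) --- which is the point your appeal to projectivity of the components is implicitly making.
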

We fix a scheme $S$. As in \cite{ayoub-etale,agv} we introduce the category of \emph{effective \'etale rational motives} $\DA(S)^{\eff}\colonequals\DA_{\et}^{\eff}(S,\Q)=\mathrm{SH}_{\et}^{\eff}(S,\Q)$ as being the full  subcategory of the  infinity-category of \'etale $\Q$-sheaves on the category $\Aff\Sm/S$ of affine smooth schemes over $S$, whose objects $\mcF$ are  \emph{$\A^1$-local} (in the sense that $\Gamma(X,\mcF)\simeq\Gamma(\A^1_X,\mcF)$ for any $\Aff\Sm/S$), cfr. \cite[Definition 2.1.11]{agv}.  %
Typical objects of this category are the $\A^1$-localizations of the ($\Q$-enriched) sheaves represented by  $X\in\Sm/S$, denoted by $\Q_S(X)$.  This is the universal $\Q$-linear presentable infinity-category over $\Aff\Sm/X$ for which the following equivalences hold: $\Q_S(X)\simeq\Q_S(\A^1_X)$, and $\Q_S(X)\simeq\colim\Q_S(\mcU)$ for any \'etale truncated hypercover $\mcU\to X$ (cfr. \cite[Remark 2.1.19]{agv}). It is a presentable symmetric monoidal infinity-category for which $\Q_S(X)\otimes\Q_S(Y)\simeq\Q_S(X\times_SY)$ \cite[Proposition 2.2.1.9]{HA}.%

We let $\DA(S)$ be the infinity category $\varprojlim\DA(S)^{\eff}$ with transition maps given by $\mcF\mapsto\uhom(T^1_S,\mcF)$ where $T_S^1$ is the cofiber of the (split!) inclusion  $\Q_S(S)\to\Q_S(\G_{m,S})$ given by the unit section. In this the universal presentable monoidal category over $\DA^{\eff}(S)$ on which   the endofunctor $\mcF\mapsto\mcF\otimes T^1_S$ is invertible (see  \cite[Definition 2.38]{robalo}). %

\begin{rmk}
The infinity categories $\DA(S)$ %
are equipped with a six-functor formalism \cite{ayoub-th1, ayoub-th2} and \cite[Section 3]{ayoub-ICM}. In particular,  any  morphism $f\colon S'\to S$ induces a (monoidal) pullback functor $f^*\colon\DA(S)\to\DA(S')$. They also satisfy \'etale descent \cite[Theorem 2.3.4]{agv}.%
\end{rmk}

If $S$ is an affine scheme of  finite dimension, then $\DA(S)$ is compactly generated by twists and shifts of the motives $\Q_S(X)$ with $X$ in $\Aff\Sm/S$ \cite[Remark 2.4.23]{agv} and we let $\DA^{\gm}(S)$ be the full subcategory of compact objects in this case. Note that the pullback functors (along maps between affine schemes) preserve the categories $\DA^{\gm}(S)$. If $S$ is a general scheme, we let $\DA(S)^{\gm}$ be the full stable  subcategory of $\DA(S)$  obtained by (Zariski) descent from the categories $\DA(S)^{\gm}$ for $S$ affine. As the unit object is compact in $\DA(S)$ whenever $S$ is affine, this category contains the  subcategory of (fully) dualizable objects $\DA(S)^{\fd}$. %
 \begin{rmk}One may consider alternatively the category $\DA^{\wedge}_{\et}(S,\Q)$ (see \cite[Page 6]{agv}) which is defined similarly out of \'etale \emph{hypersheaves} (it is the one typically considered in the motivic literature). It enjoys many of the formal properties of $\DA(S)$ (it may not be compactly  generated though). As a matter of fact, it coincides with $\DA(S)$ whenever $S$ is locally of finite Krull dimension (see \cite[Proposition 2.4.19]{agv}). 
 \end{rmk}
 As in \cite{lurie:SAG}, we denote by $\QCoh(S)$ the derived infinity-category of quasi-coherent modules over a scheme $S$ and by  $\Perf(S)$ its full subcategory of (locally) perfect complexes.
 
\begin{cor}Let $S$ be a scheme over $\Q$.\begin{enumerate}[(a)]
\item\label{b} The functor $X\mapsto \dRR{X}{S}$ defined on affine schemes can be extended uniquely to a functor of infinity-categories $\DA(S)^{\gm}\to\QCoh(S)^{\op}$ for any $S$,  which is  monoidal and compatible with arbitrary pullbacks. %
\item \label{c}The functor $\dR_S$ restricts to a functor $\DA(S)^{\fd}\to\Perf(S)^{\op}$.
\end{enumerate}
\end{cor}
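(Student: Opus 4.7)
The plan is to invoke the universal property of $\DA(S)^{\eff}$ recalled in the excerpt: it is the initial presentable $\Q$-linear symmetric monoidal category receiving a functor from $\Aff\Sm/S$ that inverts $\A^1$-projections and satisfies étale (hyper)descent. I view $X\mapsto\dRR{X}{S}$ as a functor $\Aff\Sm/S\to\QCoh(S)^{\op}$ and verify the hypotheses directly from the Proposition: étale hyperdescent from the vanishing in item~(1) together with the base-change equivalence in item~(2); $\A^1$-invariance from combining items~(3) and~(4), yielding
\[
\dRR{\A^1_X}{S}\simeq\dRR{X}{S}\otimes_{\mcO_S}\dRR{\A^1_S}{S}\simeq\dRR{X}{S};
\]
and the symmetric monoidal upgrade from the Künneth isomorphism in item~(3).

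Applying the universal property produces a symmetric monoidal functor $\DA(S)^{\eff,\gm}\to\QCoh(S)^{\op}$ when $S$ is affine. To pass to the stable category $\DA(S)^{\gm}$ one needs the Tate object $T^1_S$ to be carried to an invertible object. Item~(4) computes $\dRR{\G_{m,S}}{S}\simeq\mcO_S\oplus\mcO_S[-1]$, with the split surjection to $\mcO_S$ identified with the map induced by the unit section, so the fibre---which is the image of $T^1_S$ in $\QCoh(S)^{\op}$---is $\mcO_S[-1]$, invertible in $\QCoh(S)$. Compatibility with pullbacks follows from item~(2) and the uniqueness in the universal property: both $\dR_{S'}\circ f^*$ and $f^*\circ\dR_S$ extend the same functor on affine smooth schemes over $S'$. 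For non-affine $S$ I would glue by Zariski descent, which is satisfied by both $S\mapsto\DA(S)^{\gm}$ (by the definition given in the excerpt) and $S\mapsto\QCoh(S)^{\op}$, the affine pullback-compatibility providing the required cocycle data. Part~(b) is then essentially formal: a symmetric monoidal functor of stable symmetric monoidal infinity-categories preserves fully dualizable objects, and the fully dualizable objects of $\QCoh(S)$ are exactly the perfect complexes $\Perf(S)$.

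The main obstacle I anticipate is careful bookkeeping of universal properties in the contravariant setting, since $\QCoh(S)^{\op}$ is generally not presentable; one cannot invoke the presentable $\Q$-linear universal property of $\DA(S)^{\eff}$ verbatim with $\QCoh(S)^{\op}$ as target. A clean remedy is to land in a larger presentable container (for instance via $\Ind$/$\Pro$-completion, or by enlarging to a module category opposite) and only at the end restrict to compact objects $\DA(S)^{\gm}$, where colimit-preservation becomes irrelevant. A secondary technical point is that item~(1) directly delivers vanishing of higher cohomology, hence Čech descent; the upgrade to étale hyperdescent is automatic for truncated sheaves but deserves an explicit remark for the unbounded complex $\Omega^\bullet_{X/S}$.
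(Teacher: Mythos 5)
Your proposal is correct and follows essentially the same route as the paper: verify \'etale locality and $\A^1$-locality of $\dRR{-}{S}$ from items (1), (3) and (4), invoke the universal property of $\DA^{\eff}(S)$, check via (4) that $T^1_S$ goes to the invertible object $\mcO_S[-1]$ to pass to $\DA(S)$, glue over non-affine $S$ by Zariski descent, and deduce the second part by preservation of dualizable objects together with the identification of dualizables in $\QCoh(S)$ with $\Perf(S)$. The contravariance/presentability issue you flag is handled in the paper by regarding $\dRR{-}{S}$ as an \emph{object} of the $\QCoh(S)$-valued $\A^1$-local \'etale sheaf category, which is equivalent to your dual formulation, so your remedy is consistent with the intended argument.
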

\begin{proof}
If $S$ is affine, by \eqref{1} and \eqref{4}  we deduce that $\dRR{-}{S}$ defines a complex of presheaves on $\Aff\Sm/S$ which is \'etale local (in the sense that $\Gamma(X,\mcF)\simeq\mathrm{R}\Gamma_{\et}(X,\mcF)$ for any $X\in\Aff\Sm/S$) and $\A^1$-local, hence an object of $\DA^{\eff}(S)$ (even, of $\DA^{\wedge}_{\et}(S,\Q)$). The values of $X\mapsto\dRR{X}{S}$ lie in $\QCoh(S)$ hence defining a functor $\DA^{\eff}(S)\to\QCoh(S)^{\op}$. By \eqref{4} it sends $T^1_S$ to an invertible object and therefore (using \eqref{3}) it extends canonically to $\DA(S)$.  
Properties \eqref{2} and  \eqref{3}  imply the statement of \eqref{b} for $S$ affine.  The general case can be obtained by Zariski descent of both categories involved. %

Property \eqref{c} follows formally from \eqref{b} by taking dualizable objects on both sides (see e.g. \cite[Proposition 7.2.4.4]{HA} for a description of dualizable objects in the target).%
\end{proof}

\begin{rmk}One can define a de Rham realization on the whole $\DA(S)$ for any $S$, compatible with pullbacks along smooth morphisms, at the expense of considering the (derived category of) \emph{solid} modules $\mcD(\mcO_{S,\blacksquare})$ as a target as it is done in \cite[Corollary 4.39]{lbv}. Solid modules  have indeed a full six-functor formalism \cite[Lecture XI]{scholze-cond}  so that the base change$f^*$ along a smooth map $f$  is equipped with a left adjoint $f_!$ and therefore commutes with limits, which allows one to extend the equivalence $f^*\dR\simeq\dR f^*$ from $\DA(S)^{\gm}$ to $\DA(S)$. Note that the category of dualizable objects in $\mcD(\mcO_{S,\blacksquare})$ coincides with $\Perf(S)$ so that the theorem below would still follow as stated. It is also possible to give a  \emph{covariant} version of the de Rham realization using relative de Rham stacks. We do not pursue this approach here.
\end{rmk}
\begin{theorem}\label{main}
	Let $S$ be a scheme over $\Q$  and let $M$ be a dualizable motive in $\DA(S)$. Then $\dRR{M}{S}$ is a perfect complex whose cohomology groups are vector bundles on $S$. 
\end{theorem}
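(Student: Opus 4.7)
By the preceding corollary, $\dRR{M}{S}$ is already known to be perfect, so the content of the theorem is the assertion that its cohomology sheaves are vector bundles. The plan is to combine the nil-invariance of $\DA_\et(-,\Q)$ with the Cohen structure theorem to reduce, locally at each point of $S$, to the trivial case of a field. To that end I would first observe that local freeness is Zariski local on $S$ and can be checked on stalks; then, using faithful flatness of completion, reduce to the case $S=\Spec A$ with $A$ a complete Noetherian local ring over $\Q$ with residue field $\kappa$. (A preliminary spreading-out step is required to reach the Noetherian world from a general $S$; this should follow routinely from the compactness of dualizable objects together with the continuity of $\DA$ in the base.)

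Next, Cohen's theorem provides a coefficient field $\kappa\hookrightarrow A$. Writing $i\colon\Spec\kappa\to\Spec A$ for the closed immersion coming from $A\twoheadrightarrow\kappa$ and $p\colon\Spec A\to\Spec\kappa$ for the retraction coming from $\kappa\hookrightarrow A$, and setting $A_n:=A/\mfm^n$ with corresponding maps $i_n,p_n$, each $i_n$ is a nil-immersion since $A_n$ is Artinian local with residue field $\kappa$. By nil-invariance of $\DA_\et(-,\Q)$, the pullback $i_n^*\colon\DA(A_n)\xrightarrow{\sim}\DA(\kappa)$ is therefore an equivalence, and its inverse is forced to be $p_n^*$ since $p_n\circ i_n=\mathrm{id}$. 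Setting $\bar M:=i^*M$, this yields canonical equivalences $M|_{A_n}\simeq p_n^*\bar M$. Applying $\dR$ and using its compatibility with arbitrary pullbacks,
\[
\dRR{M|_{A_n}}{A_n}\simeq p_n^*\dRR{\bar M}{\kappa}\simeq \dRR{\bar M}{\kappa}\otimes_\kappa A_n.
\]
Over the field $\kappa$ the perfect complex $\dRR{\bar M}{\kappa}$ is automatically formal, $\dRR{\bar M}{\kappa}\simeq\bigoplus_i\kappa^{d_i}[-i]$ with $d_i:=\dim_\kappa H^i\dRR{\bar M}{\kappa}$. Consequently $\dRR{M|_{A_n}}{A_n}\simeq\bigoplus_i A_n^{d_i}[-i]$ compatibly in $n$, thanks to the fact that the coefficient field $\kappa\subset A$ is chosen once and for all. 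Passing to the inverse limit and using completeness of $A$ to identify $\dRR{M}{A}\simeq\varprojlim_n\dRR{M|_{A_n}}{A_n}$ (a standard fact for perfect complexes on a complete Noetherian local ring), one concludes $\dRR{M}{A}\simeq\bigoplus_i A^{d_i}[-i]$, whose cohomology modules are free of rank $d_i$.

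The heart of the argument is the nil-invariance step, which is purely formal once set up: the classical appeal to Hodge degeneracy and to the invariance of de Rham under infinitesimal thickenings is replaced by the fact that $\DA_\et(-,\Q)$ does not see nilpotents. The main technical obstacles I foresee are the initial reduction from a general base $S$ over $\Q$ to the complete Noetherian local setting (requiring a spreading-out argument), and the verification that the isomorphisms over the Artinian quotients $A_n$ assemble into a genuinely compatible inverse system — both of which become manageable once the coefficient field inside $A$ is pinned down at the outset.
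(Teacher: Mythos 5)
Your proposal is correct and follows essentially the same route as the paper: spread out to a Noetherian base, pass to completed local rings, use nil-invariance of $\DA_{\et}(-,\Q)$ to identify motives over the Artinian quotients $A/\mfm^n$ with motives over the residue field, split the de Rham complex there, and pass to the inverse limit. The only cosmetic difference is that the paper first localizes in the \'etale topology so as to work over $\bar{\Q}$ and at $\bar{\Q}$-rational closed points, making the coefficient field tautological, whereas you invoke Cohen's structure theorem; both work in characteristic zero.
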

\begin{proof}
The statement is \'etale local on $S$ so we may assume it is affine over $\bar{\Q}$. We may write $\mcO(S)$ as a union of its finitely generated $\bar{\Q}$-subalgebras $A$ and $\DA(S)^{\gm}\simeq\varinjlim\DA(\Spec A)^{\gm}$ (\cite[Lemma 1.A.2]{ayoub-rig} and \cite[Proof of 2.5.1]{agv}). We can then find a dualizable model $M_A$ of $M$ over some  $A$ as above. Since $\dRR{M}{S}\simeq\dRR{M_A}{\Spec A}\otimes_{A}\mcO(S)$ we may and do replace $S$ with $\Spec A$ and assume that it is of finite type over $\bar{\Q}$.  We already know that $\dRR{M}{S}$ is a perfect complex of $A$-modules using \eqref{c}. We are now trying to prove that its  cohomology groups (which are coherent $A$-modules) are projective, or equivalently that this complex splits. Using e.g. \cite[Lemmas 00MA and 00MC]{stacks} and \cite[Théorème  II.5.2.1]{bourbaki-algcomm}, it suffices to work on the completed stalks at ($\bar{\Q}$-rational) maximal points, and hence to  show that the perfect complex $\dRR{M}{S}\otimes_{\mcO_S}\widehat{\mcO}_{S,s}$ is split for any closed point $s$ of $S$.
   
We fix one of them, and we let   $\DA(\widehat{ S}_s)$ be the infinity-category $\varprojlim\DA(\Spec(\mcO_{S,s}/\mfm^n)) $. We point out that this category is equivalent to $\DA(\bar{\Q})$ as each functor $\DA(\Spec(\mcO_{S,s}/\mfm^n))\to\DA(\bar{\Q})$ is invertible \cite[Remarque 2.1.161]{ayoub-th1}. We also let $M\mapsto\widehat{ M}_s$ be the natural functor $\DA(S)\to\DA(\widehat{ S}_s)$. It corresponds, under the equivalence $\DA(\widehat{ S}_s)\simeq\DA(\bar{\Q})$ to the functor $M\mapsto M_s=s^*M$. The de Rham realization  induces a functor $\dRR{-}{\widehat{\mcO}_{S,s}}\colon\DA(\widehat{ S}_s)^{\fd}\to\varprojlim\Perf(\mcO_{S,s}/\mfm^n)^{\op}\simeq\Perf(\widehat{ \mcO}_{S,s})^{\op}$ \cite[Proposition A.4]{efimov}.  
 If we denote by  $\Pi $ be the structural morphisms to $\Spec\bar{\Q}$, we have that $s^*\widehat{ M}_s$ and $s^*\Pi^*M_{s}$ coincide on $\DA(\bar{\Q})$ and hence $ \widehat{ M}_s\simeq\Pi^*M_{s}$ in $\DA(\widehat{ S}_s)$ so that $\dRR{M}{S}\otimes_{\mcO_S}\widehat{\mcO}_{S,s}\simeq\dRR{\widehat{ M}_s}{\widehat{ \mcO}_s}\simeq\dRR{M_s}{\bar{\Q}}\otimes_{\bar{\Q}}\widehat{ \mcO}_{S,s}$. This latest complex is a \emph{split} perfect complex of $\widehat{ \mcO}_{S,s}$-modules since $\dRR{M_s}{\bar{\Q}}$ obviously is a split complex of $\bar{\Q}$-vector spaces, and this concludes the proof.
\end{proof}

 \begin{cor}\label{cor}
If $X/S$ is a smooth and proper map of schemes over $\Q$, then the de Rham cohomology groups $H^i_{\dR}(X/S)$ are vector bundles over $S$.
 \end{cor}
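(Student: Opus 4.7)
The plan is to deduce the corollary from Theorem \ref{main} applied to the motive $M = \Q_S(X) \in \DA(S)$ attached to the smooth and proper morphism $f: X \to S$. Two ingredients are needed: first, that $\Q_S(X)$ is (fully) dualizable in $\DA(S)$; second, that the de Rham realization of $\Q_S(X)$ agrees with the complex $\dRR{X}{S}$ of Definition \ref{definition-of-omega-1}, so that its cohomology sheaves are the usual $H^i_{\dR}(X/S)$.

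The second point is essentially by construction. The functor $\dR_S$ on motives was obtained by étale-sheafifying and $\A^1$-localizing the assignment $X \mapsto \dRR{X}{S}$ on $\Aff\Sm/S$, and properties \eqref{2} and \eqref{3} of the preceding proposition guarantee compatibility with étale covers and fibre products. Since both sides satisfy Zariski descent in $X$, the identification $\dRR{\Q_S(X)}{S} \simeq \dRR{X}{S}$ valid on affine smooth $X$ propagates to arbitrary smooth $X/S$.

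The substantive step, and the one I expect to be the main obstacle if one wished to keep the proof fully self-contained, is the dualizability of $\Q_S(X)$ for $f: X \to S$ smooth and proper. I would invoke this as a cornerstone input from motivic homotopy theory (of the kind alluded to in the introduction), namely via Ayoub's six-functor formalism: properness yields $f_! \simeq f_*$, while relative purity identifies $f_\sharp$ with a Thom-twisted shift of $f_!$, the Thom motive of the relative cotangent bundle being invertible in $\DA(X)$. Together with the projection formula, this exhibits $\Q_S(X) = f_\sharp f^*\Q_S$ as fully dualizable in $\DA(S)$, with Spanier--Whitehead dual a suitable Thom-twist of $f_* f^*\Q_S$. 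Once this is granted, Theorem \ref{main} applied to $M = \Q_S(X)$ immediately yields that $\dRR{X}{S}$ is a perfect complex on $S$ whose cohomology sheaves $H^i_{\dR}(X/S)$ are vector bundles, which is the desired conclusion.
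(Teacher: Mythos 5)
Your proposal is correct and follows essentially the same route as the paper: apply Theorem \ref{main} to $M=\Q_S(X)$, identify $\dR(\Q_S(X))$ with the classical relative de Rham complex via descent, and use dualizability of the motive of a smooth proper morphism. The only difference is cosmetic: the paper simply cites Riou's Spanier--Whitehead duality theorem for the dualizability of $\Q_S(X)$, whereas you sketch the standard six-functor argument ($f_!\simeq f_*$ by properness, relative purity, projection formula), which is a valid alternative justification of the same input.
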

\begin{proof}
If $X/S$ is smooth and proper, then $\Q_S(X)$ is dualizable \cite[Th\'eor\`eme 2.2]{riou-dual}. Note that $\dR(\Q_S(X))$ coincides with the ``classical'' complex computing relative de Rham cohomology, as it enjoys Zariski (even: \'etale) descent on source and target.
\end{proof}

\begin{rmk}
The proof above has been inspired by the proof of  \cite[Theorem 4.46]{lbv} which deals with adic spaces over a non-archimedean field of characteristic zero. There are differences between the two situations though: for instance in the analytic setting, the ($\pi$-adically completed) stalk of the structure sheaf at a rational point is directly a (valued) field (see \cite[Theorem 2.8.6]{agv}).
\end{rmk}

\bibliographystyle{plain}

\begin{thebibliography}{10}
	
	\bibitem{andre-ens}
	Yves Andr\'{e}.
	\newblock Solution algebras of differential equations and quasi-homogeneous
	varieties: a new differential {G}alois correspondence.
	\newblock {\em Ann. Sci. \'{E}c. Norm. Sup\'{e}r. (4)}, 47(2):449--467, 2014.
	
	\bibitem{ayoub-ICM}
	Joseph Ayoub.
	\newblock A guide to (\'etale) motivic sheaves.
	\newblock Proceedings of the ICM 2014.
	
	\bibitem{ayoub-th1}
	Joseph Ayoub.
	\newblock Les six op\'erations de {G}rothendieck et le formalisme des cycles
	\'evanescents dans le monde motivique, {I}.
	\newblock {\em Ast\'erisque}, (314):x+466 pp. (2008), 2007.
	
	\bibitem{ayoub-th2}
	Joseph Ayoub.
	\newblock Les six op\'erations de {G}rothendieck et le formalisme des cycles
	\'evanescents dans le monde motivique, {II}.
	\newblock {\em Ast\'erisque}, (315):vi+364 pp. (2008), 2007.
	
	\bibitem{ayoub-etale}
	Joseph Ayoub.
	\newblock La r\'{e}alisation \'{e}tale et les op\'{e}rations de {G}rothendieck.
	\newblock {\em Ann. Sci. \'{E}c. Norm. Sup\'{e}r. (4)}, 47(1):1--145, 2014.
	
	\bibitem{ayoub-rig}
	Joseph Ayoub.
	\newblock Motifs des vari\'et\'es analytiques rigides.
	\newblock {\em M\'em. Soc. Math. Fr. (N.S.)}, (140-141):vi+386, 2015.
	
	\bibitem{agv}
	Joseph Ayoub, Martin Gallauer, and Alberto Vezzani.
	\newblock The six-functor formalism for rigid analytic motives.
	\newblock {\em Forum Math. Sigma}, 10:Paper No. e61, 182, 2022.
	
	\bibitem{bourbaki-algcomm}
	N.~Bourbaki.
	\newblock {\em \'{E}l\'ements de math\'ematique. {F}ascicule {XXVII}.
		{A}lg\`ebre commutative. {C}hapitre 1: {M}odules plats. {C}hapitre 2:
		{L}ocalisation}.
	\newblock Actualit\'es Scientifiques et Industrielles, No. 1290. Herman, Paris,
	1961.
	
	\bibitem{clausen}
	Dustin Clausen.
	\newblock Algebraic de {R}ham cohomology.
	\newblock Lecture notes. Available at
	\url{sites.google.com/view/algebraicderham}.
	
	\bibitem{deligne}
	P.~Deligne.
	\newblock Th\'{e}or\`eme de {L}efschetz et crit\`eres de
	d\'{e}g\'{e}n\'{e}rescence de suites spectrales.
	\newblock {\em Inst. Hautes \'{E}tudes Sci. Publ. Math.}, (35):259--278, 1968.
	
	\bibitem{efimov}
	Alexander~I. Efimov.
	\newblock Categorical formal punctured neighborhood of infinity, {I}.
	\newblock arXiv:1711.00756 [math.AG], 2017.
	
	\bibitem{katz}
	Nicholas~M. Katz.
	\newblock Nilpotent connections and the monodromy theorem: {A}pplications of a
	result of {T}urrittin.
	\newblock {\em Inst. Hautes \'{E}tudes Sci. Publ. Math.}, (39):175--232, 1970.
	
	\bibitem{lbv}
	Arthur-C\'{e}sar Le~Bras and Alberto Vezzani.
	\newblock The de~{R}ham--{F}argues--{F}ontaine cohomology.
	\newblock {\em Algebra Number Theory}, 17(12):2097--2150, 2023.
	
	\bibitem{dagVII}
	Jacob Lurie.
	\newblock Derived {A}lgebraic {G}eometry {V}{I}{I}: {S}pectral {S}chemes.
	\newblock Available from author's webpage.
	
	\bibitem{HA}
	Jacob Lurie.
	\newblock Higher algebra.
	\newblock Available from author's webpage.
	
	\bibitem{lurie:SAG}
	Jacob Lurie.
	\newblock Spectral algebraic geometry.
	\newblock Available from author's webpage.
	
	\bibitem{riou-dual}
	Jo\"{e}l Riou.
	\newblock Dualit\'{e} de {S}panier-{W}hitehead en g\'{e}om\'{e}trie
	alg\'{e}brique.
	\newblock {\em C. R. Math. Acad. Sci. Paris}, 340(6):431--436, 2005.
	
	\bibitem{robalo}
	Marco Robalo.
	\newblock {$K$}-theory and the bridge from motives to noncommutative motives.
	\newblock {\em Adv. Math.}, 269:399--550, 2015.
	
	\bibitem{scholze-cond}
	Peter Scholze.
	\newblock Lectures on Condensed Mathematics.
	\newblock Available from author's webpage.
	
	\bibitem{stacks}
	The {Stacks Project Authors}.
	\newblock {S}tacks {P}roject.
	\newblock \url{http://stacks.math.columbia.edu}, 2014.
	
\end{thebibliography}

\end{document}